\newcommand{\xrightarrow}{\stackrel{n\to\infty}{\hbox to 1cm{\rightarrowfill}}}
\newcommand{\eqref}[1]{(\ref{#1})}
\newcommand{\Q}{{\mathbb Q}}
\newcommand{\Z}{{\mathbb Z}}
\newcommand{\R}{{\mathbb R}}
\newcommand{\PP}{{\mathbb P}}
\newcommand{\E}{{\mathbb E}}
\newcommand{\llbr}{[\![}
\newcommand{\rrbr}{]\!]}
\newcommand{\lp}{(\hspace*{-2pt}|}
\newcommand{\rp}{|\hspace*{-2pt})}
\newcommand{\oo}{\infty}
\newcommand{\sF}{{\mathcal F}}
\newcommand{\es}{\varnothing}
\newcommand{\Om}{\Omega}
\newcommand{\om}{\omega}
\newcommand{\pc}{p_{\mathrm c}}
\newcommand{\Mc}{M_{\mathrm{c}}}
\newcommand{\bc}{\mathbf{c}}
\newtheorem{theorem}{Theorem}
\newtheorem{lemma}[theorem]{Lemma}
\newtheorem{prop}[theorem]{Proposition}
\begin{document}
\begin{frontmatter}

\title{Lattice embeddings in percolation}
\runtitle{Lattice embeddings in percolation}

\begin{aug}
\author[A]{\fnms{Geoffrey R.} \snm{Grimmett}\ead[label=e1]{g.r.grimmett@statslab.cam.ac.uk}\ead[label=u1,url]{http://www.statslab.cam.ac.uk/\textasciitilde grg/}} and
\author[B]{\fnms{Alexander E.} \snm{Holroyd}\corref{}\ead[label=e2]{holroyd@microsoft.com}\ead[label=u2,url]{http://research.microsoft.com/\textasciitilde holroyd}}
\runauthor{G. R. Grimmett and A. E. Holroyd}
\affiliation{Cambridge University and Microsoft Research, and~University~of~British~Columbia}
\address[A]{Statistical Laboratory\\
Centre for Mathematical Sciences\\
Cambridge University\\
Wilberforce Road\\
Cambridge CB3 0WB\\
United Kingdom\\
\printead{e1}\\
\printead{u1}}
\address[B]{Microsoft Research\\
1 Microsoft Way\\
Redmond, Washington 98052\\
USA\\
and\\
Department of Mathematics\\
University of British Columbia\\
Vancouver, BC V6T 1Z2\\
Canada\\
\printead{e2}\\
\printead{u2}}
\end{aug}

\received{\smonth{3} \syear{2010}}
\revised{\smonth{9} \syear{2010}}

%
\begin{abstract}
Does there exist a Lipschitz injection of $\Z^d$ into the open
set of a site percolation process on $\Z^D$, if the percolation
parameter $p$ is sufficiently close to~$1$? We prove a
negative answer when $d=D$ and also when $d\geq2$ if the
Lipschitz constant $M$ is required to be $1$. Earlier work of
Dirr, Dondl, Grimmett, Holroyd and Scheutzow yields a~positive
answer for $d<D$ and $M=2$. As a result, the above question is
answered for all $d$, $D$ and $M$. Our proof in the case $d=D$
uses Tucker's lemma from topological combinatorics, together
with the aforementioned result for $d<D$. One application is
an affirmative answer to a question of Peled concerning
embeddings of random patterns in two and more dimensions.
\end{abstract}

\setattribute{keyword}{AMS}{AMS 2000 subject classification.}
\begin{keyword}[class=AMS]
\kwd{60K35}.
\end{keyword}
\begin{keyword}
\kwd{Lipschitz embedding}
\kwd{lattice}
\kwd{random pattern}
\kwd{percolation}
\kwd{quasi-isometry}.
\end{keyword}

\end{frontmatter}

\section{Introduction and results}
\subsection{Preliminaries}\label{ss1}

Let $\Z^d$ denote the $d$-dimensional integer lattice.
Elements of $\Z^d$ are called \textit{sites}. Let \mbox{$\|\cdot\|_r$}
denote the $\ell^r$-norm on $\Z^d$, and abbreviate
\mbox{$\|\cdot\|_1$} to \mbox{$\|\cdot\|$}. We say that a map
$f\dvtx\Z^d\to\Z^D$ is $M$-\textit{Lipschitz}, or
$M$-\textit{Lip}, if $\|f(x)-f(y)\|\leq M$ for all $x,y\in\Z^d$ with
$\|x-y\|=1$.

For $p \in[0,1]$, consider the site percolation model on
$\Z^D$. That is, declare each site to be \textit{open} (or
$p$-\textit{open}) with probability $p$, and otherwise \textit{closed},
with different sites receiving independent
designations. Let $W_p(\Z^D)$ denote the random set of open
sites, and write $\PP_p$ and $\E_p$ for the associated
probability measure and expectation operator.

We are interested primarily in the probability
%
\begin{equation}\label{h1}
L(d,D,M,p):= \PP_p\bigl(
\mbox{$\exists$ an $M$-Lip injection from $\Z^d$ to $W_p(\Z^D)$}
\bigr).
\end{equation}
Clearly $L$ is increasing in $D$, $M$ and $p$, and decreasing
in $d$. Furthermore, $L$ is $\{0,1\}$-valued, since $\PP_p$ is a
product measure and the event in \eqref{h1} is invariant under
translations of $\Z^D$. We define the critical probability
\[
\pc(d,D,M):=\inf\{p\dvtx L(d,D,M,p)=1\}
\]
and furthermore
\[
\Mc(d,D):=\min\{M\geq1\dvtx\pc(d,D,M)<1\}
\]
(where $\min\es:=\infty$).
That is, $\Mc(d,D)$ is the
smallest $M$ such that, for some $p<1$, there exists, $\PP_p$-a.s., an
injective $M$-Lip map from $\Z^d$ to the open sites of~$\Z^D$.

Note that $L(1,D,M,p)$ is simply the probability that there
exists an open bi-infinite self-avoiding path in the graph with
vertex-set $\Z^D$ and an edge connecting every pair of sites at
$\ell^1$-distance at most $M$. It follows from standard
percolation results that $\pc(1,D,M)$ is the critical
probability for site percolation on this graph (see, e.g.,
\cite{lyons}, proof of Theorem 3.9, for a
proof for arbitrary graphs). Therefore, for $M\geq1$,
\[
\pc(1,D,M) \cases{ =1, &\quad if $D=1$,\cr
\in(0,1), &\quad if $D\geq2$.}
\]
We deduce in particular that $\pc(d,D,M)>0$ for
all $d,D,M\geq1$. The problem of interest is to
determine for which $d$, $D$, $M$ it is the case that
$\pc(d,\break D, M) = 1$.

\subsection{Main result}\label{ss2}

\begin{theorem}\label{main}Let $d,D,M$ be positive integers.
\begin{longlist}[(a)]
%
\item[(a)] For all $d$, we have $\pc(d,d+1,2)<1$, and hence $\Mc
(d,d+1)\le2$.
\item[(b)] For all $D\geq2$, we have $\pc(2,D,1)=1$, and hence
$\Mc(2,D)>1$.
\item[(c)] For all $d\,{\geq}\,2$ and all $M$, we have $\pc(d,d,M)\,{=}\,1$,
and hence \mbox{$\Mc(d,d)\!=\!\oo$}.
\end{longlist}
\end{theorem}

It is an elementary observation that if $d>D$, then
$L(d,D,M,1)=0$ for all $M$, and hence $\Mc(d,D)=\oo$. [To
check this, suppose that $f\dvtx\Z^d \to\Z^D$ is an $M$-Lip
injection, and let $S_n:= \{x\in\Z^d\dvtx\|x\|\le n\}$. Then
$|S_n|$ has order $n^d$, but $|f(S_n)|$ has order at most $n^D$
(\mbox{$<$}$n^d$), in
%
\begin{table}
\tablewidth=200pt
\caption{The values of $\Mc(d,D)$
for $d,D \ge1$}\label{table1}
\begin{tabular*}{\tablewidth}{@{\extracolsep{\fill}}lcccccc@{}}
\hline
& \multicolumn{6}{c@{}}{$\bolds{D}$}\\[-4pt]
& \multicolumn{6}{c@{}}{\hrulefill}\\
$\bolds{d}$&\textbf{1}&\textbf{2}&\textbf{3}&\textbf{4}&\textbf{5}&$\bolds{\ldots}$ \\
\hline
1 & $\infty$ & 1 & 1 & 1 & 1&\ldots\\
2 & $\infty$ & $\infty$ &2 & 2& 2&\ldots \\
3 & $\infty$ & $\infty$ & $\infty$ &2 & 2&\ldots\\
4 & $\infty$ & $\infty$ & $\infty$ & $\infty$ & 2&\ldots\\
5 & $\infty$ & $\infty$ & $\infty$ & $\infty$ & $\infty$& $\smash\ldots
$\\
\vdots& \vdots& \vdots& \vdots& \vdots& $\vdots$ & $\ddots$ \\
\hline
\end{tabular*}
\vspace*{-2pt}
\end{table}
contradiction of the injectivity of $f$.] Therefore, the
above results suffice to determine the values of $\Mc$ for all
$d$, $D$, as summarized in Table \ref{table1}. We note in particular that
%
\begin{equation}\label{mcoo}
\Mc(d,D) < \oo \quad\mbox{if and only if}\quad d<D.
\end{equation}

Theorem \ref{main}(a) is an immediate consequence of a
substantially stronger statement proved in \cite{ddghs}, which
we state next. For $x=(x_1,\ldots,x_{d-1})\in\Z^{d-1}$ and
$z\in\Z$ we write $(x,z):=(x_1,\ldots,x_{d-1},z)\in\Z^d$. Write
$\Z_+:=\Z\cap(0,\infty)$.
\begin{theorem}[(Lipschitz percolation \cite{ddghs})]\label{don}
Let $d\geq2$, and suppose $p>1-(2d)^{-2}$. There exists
$\PP_p$-a.s. a (random) $1$-Lip function $F\dvtx\Z^{d-1}\to\Z_+$
such that for every $x\in\Z^{d-1}$, the site $(x,F(x))\in\Z^d$
is open.\vadjust{\goodbreak}
\end{theorem}

With $F$ given as in Theorem \ref{don}, the map $x\mapsto(x,F(x))$ is
evidently a~$2$-Lip injection, thus establishing Theorem
\ref{main}(a). Other applications of Theorem~\ref{don} appear
in \cite{DDS,g-h-sphere}. Further properties of $F$ are explored in
\cite{GH4}, where an improved bound on the value of $p$
in Theorem \ref{don} is given.

The proof of Theorem \ref{main}(b) is relatively
straightforward and may be found in Section \ref{nn}. (The proof
involves showing that any $1$-Lip injection from $\Z^2$ to the
full lattice $\Z^D$ must satisfy rather rigid conditions.) The
principal contribution of the current paper is Theorem
\ref{main}(c). Interestingly, our proof of this nonexistence
result makes use of the above existence result, Theorem
\ref{don}. Another essential ingredient of this proof is
Tucker's lemma from topological combinatorics (see
\cite{matousek,tucker}).

It is immediate from the definition of $\Mc(d,D)$ that, if
$\Mc(d,D)=\oo$, then $\pc(d,D,M)=1$ for all $M\ge1$. On the
other hand, we have the following result when $\Mc(d,D)<\oo$
[which occurs if and only if $d<D$, as noted in~\eqref{mcoo}
above].
\begin{prop}\label{other0}
Let $d$, $D$ be positive integers such that $\Mc(d,D)<\oo$.
Then $\pc(d,D,M)\to0$ as $M\to\infty$.
\end{prop}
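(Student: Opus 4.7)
The plan is a renormalization (coarse-graining) argument. Fix $M_0 := \Mc(d,D)$, which is finite by hypothesis, and choose any $p_0 \in (\pc(d,D,M_0),\,1)$; thus $\P_{p_0}$-almost surely there exists an $M_0$-Lip injection $\Z^d \to W_{p_0}(\Z^D)$. Given a large integer $K \ge 1$, I would partition $\Z^D$ into the disjoint cubes $B_a := Ka + \{0,1,\ldots,K-1\}^D$ indexed by $a \in \Z^D$, and call $a$ \emph{super-open} if $B_a$ contains at least one $p$-open site. Since the cubes are pairwise disjoint, the super-open events are independent, each occurring with probability $q := 1 - (1-p)^{K^D}$; equivalently, the super-open set on the \emph{renormalized} lattice $\Z^D$ (indexed by $a$) is an iid Bernoulli site percolation with density $q$.

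Next, I would take $p := 1 - (1-p_0)^{1/K^D}$ so that $q = p_0$. By the choice of $p_0$ applied at the renormalized scale, there is $\P_p$-almost surely an $M_0$-Lip injection $g : \Z^d \to \Z^D$ whose image consists entirely of super-open sites. Fix in advance a measurable selection rule $\phi$ picking, for each super-open $a$, a witness $\phi(a) \in B_a \cap W_p(\Z^D)$ (for instance, the lexicographically minimal such site), and set $f := \phi \circ g : \Z^d \to W_p(\Z^D)$. Injectivity of $f$ is immediate from injectivity of $g$ together with disjointness of the blocks. For the Lipschitz bound, for $x,y \in \Z^d$ with $\|x-y\| = 1$, write $a := g(x)$, $b := g(y)$ and $\phi(a) = Ka + u$, $\phi(b) = Kb + v$ with $u,v \in \{0,1,\ldots,K-1\}^D$; the triangle inequality gives
$$
\|f(x) - f(y)\| \le K\|a-b\| + \|u-v\| \le KM_0 + D(K-1) \le K(M_0 + D).
$$

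Therefore, given an integer $M \ge M_0 + D$, setting $K := \lfloor M/(M_0+D) \rfloor$ yields $\pc(d,D,M) \le 1 - (1-p_0)^{1/K^D}$, which tends to $0$ as $M \to \infty$. The main conceptual step is the first-paragraph observation that the super-open set on the block lattice is itself an iid Bernoulli process, allowing the hypothesis $\Mc(d,D) < \infty$ to be transferred to the coarser scale; the key technical step is the triangle-inequality estimate above. Beyond these I do not anticipate a serious obstacle, the only remaining point being to arrange that $\phi$ is chosen measurably so that $f$ is a bona fide random map, which a canonical selection rule supplies.
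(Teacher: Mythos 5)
Your proof is correct and follows essentially the same renormalization argument as the paper: declare a block super-open if it contains a $p$-open site, note that the block process is an i.i.d.\ site percolation at the coarser scale, apply the hypothesis $\Mc(d,D)<\oo$ there, and select a witness in each block at the cost of inflating the Lipschitz constant. The only (immaterial) differences are that the paper uses $1\times\cdots\times1\times r$ clumps rather than $K$-cubes, giving the slightly sharper constant $(2r-1)M$, and states the conclusion as a recursive inequality $\pc(d,D,(2r-1)M)\le 1-\bigl(1-\pc(d,D,M)\bigr)^{1/r}$ before invoking monotonicity exactly as you do.
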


\subsection{Embeddings of patterns}\label{ss3}
The above results concerning maps from~$\Z^d$ to the open sites
of $\Z^D$ have implications in the more general setting of maps
that preserve values indexed by $\Z^d$, as follows. Let $\Om_d
:=\{0,1\}^{\Z^d}$ be the space of percolation configurations,
in which the value $1$ (resp., $0$) is identified with the
state ``open'' (resp., ``closed''). An \textit{embedding} of
a~configuration $\eta\in\Om_d$ into a configuration $\om\in\Om_D$
is an injection $f\dvtx\Z^d\to\Z^D$ such that $\eta(x)=\om(f(x))$
for all $x\in\Z^d$. We call a configuration $\eta\in\Om_d$
\textit{partially periodic} if there exist $x\in\Z^d$ and $r \in
\Z_+$ such that $\eta(x)= \eta(x+ry)$ for all $y \in\Z^d$.
\begin{prop}[(Embedding)]\label{other}
Let $d$, $D$ be positive integers.
\begin{longlist}[(a)]
\item[(a)] Let $d \ge2$, $p\in(0,1)$ and $\eta\in\Om_d$.
For $\PP_p$-a.e. $\om\in\Om_D$,
there exists no $1$-Lip embedding of $\eta$ into $\om$.\vadjust{\goodbreak}
\item[(b)] Let $d<D$.
For every $p\in(0,1)$, there exists $M\geq1$
such that for $\PP_p$-a.e. $\om\in\Om_D$, it is the case
that for every $\eta\in\Om_d$,
there exists an $M$-Lip embedding of $\eta$ into $\omega$.
\item[(c)] Let $d=D$ and let $\eta\in\Om_d$ be a
partially periodic configuration. For every $p\in(0,1)$, $M \ge1$ and for
$\PP_p$-a.e. $\om\in\Om_D$, there exists no $M$-Lip embedding
of $\eta$ into $\om$.\vspace*{-2pt}
\end{longlist}
\end{prop}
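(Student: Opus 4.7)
\emph{Part (c)} follows directly from Theorem~\ref{main}(c). Partial periodicity supplies $x_0\in\Z^d$ and $r\in\Z_+$ with $\eta(x_0+ry)=c$ for every $y\in\Z^d$, where $c:=\eta(x_0)$. Given any $M$-Lip embedding $f$ of $\eta$ into $\omega$, the restriction $g(y):=f(x_0+ry)$ is an injection from $\Z^d$ into the level set $\{z\in\Z^d:\omega(z)=c\}$, and $g$ is $(rM)$-Lip because $\|g(y)-g(y')\|\le M\|r(y-y')\|=rM$ whenever $\|y-y'\|=1$. If $c=1$ this set equals $W_p(\Z^d)$, so Theorem~\ref{main}(c) (applied with $rM$ in place of $M$) rules out $g$ $\P_p$-a.s. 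If instead $c=0$, the set has the law of $W_{1-p}(\Z^d)$ under $\P_{1-p}$, and since $1-p<1$ the same theorem applies.

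\emph{Part (b)} is a construction via block renormalization. Using the inclusion $\Z^{d+1}\times\{0\}^{D-d-1}\hookrightarrow\Z^D$, it suffices to handle $D=d+1$. Choose $K$ so large that $p^{2K+1}+(1-p)^{2K+1}<(2(d+1))^{-2}$; this is possible since $p\in(0,1)$. Partition the last coordinate of $\Z^{d+1}$ into length-$(2K+1)$ blocks, and call the block $\{x\}\times[k(2K+1),k(2K+1)+2K]$ \emph{good} if it contains both an open and a closed $\omega$-site. The goodness field is i.i.d.\ Bernoulli on $\Z^d\times\Z_+$ with parameter exceeding $1-(2(d+1))^{-2}$, so Theorem~\ref{don} (in dimension $d+1$) provides $\P_p$-a.s.\ a $1$-Lip function $F:\Z^d\to\Z_+$ such that the block above each $x$ at height $F(x)$ is good. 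Given any $\eta\in\Om_d$, let $y(x)$ be the smallest integer in that block with $\omega(x,y(x))=\eta(x)$. Then $f(x):=(x,y(x))$ is an injection satisfying $\omega\circ f=\eta$, and for adjacent $x,x'$ in $\Z^d$,
\[
\|f(x)-f(x')\|\;\le\;1+(2K+1)|F(x)-F(x')|+4K\;\le\;6K+2.
\]
Thus the single $M:=6K+2$ works simultaneously for every $\eta$.

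\emph{Part (a)} hinges on the rigidity of $1$-Lip injections from Theorem~\ref{main}(b): for $d\ge 2$, any $1$-Lip injection $f:\Z^d\to\Z^D$ has the coordinate-sum form
\[
f(x_1,\ldots,x_d) \;=\; f(\mathbf0) + h_1(x_1)+\cdots+h_d(x_d),
\]
with each $h_i:\Z\to\Z^D$ a $1$-Lip injective path with $h_i(0)=\mathbf0$. (The image of every unit $(e_i,e_j)$-square in $\Z^d$ must be a non-degenerate $4$-cycle in $\Z^D$; a short case analysis then forces the edge increments $\phi_i(x):=f(x+e_i)-f(x)$ to depend only on $x_i$.) Consequently, for any prescribed starting value $y_0=f(\mathbf0)$, the number of $1$-Lip injections on the box $B_n:=[-n,n]^d$ is at most $(2D)^{2dn}$. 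Since $f$ is injective, $(\omega(f(x)))_{x\in B_n}$ are i.i.d.\ Bernoulli$(p)$, so the probability of matching $\eta$ on $B_n$ is at most $q^{(2n+1)^d}$ where $q:=\max(p,1-p)<1$. A union bound yields
\[
\P_p\bigl(\exists\text{ $1$-Lip embedding $f$ with }f(\mathbf0)=y_0\bigr)\;\le\;(2D)^{2dn}\,q^{(2n+1)^d},
\]
which tends to $0$ as $n\to\infty$ since $d\ge 2$ makes $(2n+1)^d$ dominate $n$. By translation invariance the left side is independent of $y_0$, so summing the countably many zeros over $y_0\in\Z^D$ gives $\P_p(\exists\text{ embedding})=0$. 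The main obstacle is precisely the unit-square rigidity, which is also the engine of Theorem~\ref{main}(b) itself; once that lemma is in hand, (a) reduces to the routine counting above, (b) is a transparent two-step construction, and (c) is essentially immediate.
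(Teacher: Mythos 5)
Your proposal is correct. Parts (b) and (c) follow essentially the paper's own route: for (c) the paper likewise restricts $f$ to the periodic sublattice $x_0+r\Z^d$ to get an $rM$-Lip injection into $W_p(\Z^d)$ (reducing to Theorem \ref{main}(c), with the $c=0$ case absorbed by the open/closed symmetry exactly as you do); for (b) the paper uses the same renormalization idea, partitioning the last coordinate into ``clumps'' that contain both an open and a closed site with high probability and invoking the existence of a Lipschitz injection into the renormalized process --- your version simply instantiates this through Theorem \ref{don} directly for $D=d+1$, with the same conclusion that one $M$ serves all $\eta$ simultaneously.

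Part (a) is where you genuinely diverge. The paper reduces to $d=2$, observes (via the unit-square rigidity) that the images of the columns $\{(i,j):j\in\Z\}$ are disjoint translates of a single self-avoiding path, and then adapts Lemma \ref{congruent}: the expected number of $n$-step paths whose $k$ translates match $k$ prescribed rows of $\eta$ is at most $(2D)^n q^{nk}$, which vanishes once $k$ is chosen with $2Dq^k<1$. You instead push the rigidity to its full conclusion --- $f(x)=f(\mathbf 0)+h_1(x_1)+\cdots+h_d(x_d)$, which does follow since the unit-square analysis applied to every coordinate pair shows $\phi_i(x)$ is unchanged by shifts in each $x_j$, $j\neq i$ --- and then run a direct entropy-versus-volume union bound on boxes: at most $(2D)^{2dn}$ candidate restrictions against a matching probability $q^{(2n+1)^d}$. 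This is clean and avoids both the reduction to $d=2$ and the auxiliary parameter $k$; what the paper's version buys in exchange is that it needs only the weaker ``translated columns'' consequence of rigidity and recycles Lemma \ref{congruent} verbatim. One cosmetic remark applying equally to your argument and the paper's proof of (c): Theorem \ref{main}(c) as stated assumes $d\ge 2$, so the $d=D=1$ case of Proposition \ref{other}(c) should be referred instead to the observation in Section \ref{ss1} that $\pc(1,1,M)=1$.
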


The current work was motivated in part by the problem of
Lipschitz embeddings of random one-dimensional configurations
(see \cite{G-demon,glr}). Proposition \ref{other}(a) extends
Theorem \ref{main}(b) to more general configurations than the
all-$1$ configuration. Part (b) answers affirmatively a
question posed by Peled concerning the existence of $M$-Lip
embeddings of $d$-dimensional random configurations into spaces
of higher dimension; see \cite{G-demon}, Section 5. Part (c)
leaves unanswered the question of whether or not there exist $d
\ge1$, $p\in(0,1)$, $\eta\in\Om_d$ and $M <\oo$ such that
with strictly positive probability (and therefore probability
$1$), there exists an $M$-Lip embedding from $\eta$ into a
random configuration $\om\in\Om_d$ having law $\PP_p$.\vspace*{-2pt}

\subsection{Quasi-isometry}\label{ss4}
There is a close connection between the existence of embeddings
and of quasi-isometries. A \textit{quasi-isometry} between two
metric spaces $(X,\delta)$ and $(Y,\rho)$ is a map $f\dvtx X\to Y$
such that there exist constants $c_i\in(0,\oo)$ with:
\begin{longlist}[(a)]
\item[(a)] $\forall x,x'\in X$,
$c_1\delta(x,x')-c_2\leq\rho(f(x),f(x'))\leq
c_3\delta(x,x')+c_4$,
\item[(b)] $\forall y\in Y$, $\exists x\in X$ such
that $\rho(f(x),y)\leq c_5$.
\end{longlist}
We call such $f$ a $\bc$-\textit{quasi-isometry} when we wish to
emphasize the role of the vector $\bc=(c_1,\ldots,c_5)$. It is not
difficult to see that the existence of a~quasi-isometry is a
symmetric relation on metric spaces. Quasi-isometries of
\textit{random} metric spaces are discussed in \cite{peled}. A \textit
{subspace} of a metric space $(X,\delta)$ is a metric space
$(U,\delta)$ with $U\subseteq X$.\vspace*{-2pt}
\begin{prop}[(Quasi-isometry)]\label{quasi}
Let $d$, $D$ be positive integers, and let~$E$ be the event
that there exists a quasi-isometry between $(\Z^d,\ell^1)$ and
some subspace of $(W_p(\Z^D),\ell^1)$.
\begin{longlist}[(a)]
\item[(a)] If $d<D$ then for all $p\in(0,1)$ we have $\PP_p(E)=1$.
\item[(b)] If $d\geq D$ then for all $p\in(0,1)$ we have
$\PP_p(E)=0$.\vspace*{-2pt}
\end{longlist}
\end{prop}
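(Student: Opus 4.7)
\noindent\emph{Part (a), $d < D$.} The plan is to reduce the claim for arbitrary $p\in(0,1)$ to the high-density regime of Theorem \ref{main}(a) via block renormalization. Choose $N$ large enough that a cube of side $N$ in $\Z^D$ contains at least $N^d$ $p$-open sites with probability exceeding $\pc(d,d+1,2)$; this is feasible because the expected count $pN^D$ dominates $N^d$ when $d<D$ and $N$ is large. Call such a block \emph{good}; the good-block indicators are i.i.d.\ Bernoulli, indexed by blocks in $\Z^D$. Restricting this process to any copy of $\Z^{d+1}$ and invoking Theorem \ref{main}(a) yields, almost surely, a $2$-Lip injection $G$ from $\Z^d$ into good blocks. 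Now construct $\Psi:\Z^d\to W_p(\Z^D)$ by injectively placing the $N^d$ points of each source block of $\Z^d$ at distinct open sites of the corresponding good target block in $\Z^D$. Since $G$ is $2$-Lip and every block has $\ell^1$-diameter $O(N)$, a direct calculation shows $\Psi$ is a quasi-isometry onto $U:=\Psi(\Z^d)\subseteq W_p(\Z^D)$ with $c_1=1$, $c_3=2$, and $c_2,c_4=O(N)$; condition (ii) is trivial for this $U$.

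\medskip

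\noindent\emph{Part (b), $d > D$.} A deterministic volume argument suffices. Given any $\bc$-quasi-isometry $f:\Z^d\to W_p(\Z^D)\subseteq\Z^D$, the lower bound $c_1\|x-x'\|-c_2\leq\|f(x)-f(x')\|$ forces $f^{-1}(y)$ to have $\ell^1$-diameter at most $c_2/c_1$, hence bounded cardinality $K=K(c_1,c_2,d)$. With $S_n:=\{x\in\Z^d:\|x\|\leq n\}$, we obtain $|f(S_n)|\geq|S_n|/K$ of order $n^d$, yet $f(S_n)$ lies in the $\ell^1$-ball of radius $c_3n+c_4$ in $\Z^D$, which contains only order $n^D$ points---a contradiction for $d>D$ and large $n$.

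\medskip

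\noindent\emph{Part (b), $d = D$.} This is the conceptual heart: the plan is to convert a coarse quasi-isometry into a genuine $M$-Lip injection by a sublattice rescaling, and then invoke the deep Theorem \ref{main}(c). Given a $\bc$-quasi-isometry $f:\Z^d\to W_p(\Z^d)$, set $N:=\lceil(c_2+1)/c_1\rceil$ and $\tilde f(x):=f(Nx)$. For distinct $x,x'\in\Z^d$, $\|Nx-Nx'\|\geq N$, so $\|\tilde f(x)-\tilde f(x')\|\geq c_1N-c_2\geq 1$, giving injectivity; for $\|x-x'\|=1$, $\|\tilde f(x)-\tilde f(x')\|\leq c_3N+c_4$, so $\tilde f$ is $M$-Lip with $M:=\lceil c_3N+c_4\rceil$. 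Theorem \ref{main}(c) then rules out such an injection $\P_p$-almost surely. Any quasi-isometry is a $\bc$-quasi-isometry for some rational $\bc\in\Q_{+}^{5}$ (shrink $c_1$ and enlarge the others), so a countable union over such $\bc$ yields $\P_p(E)=0$. I expect this sublattice trick to be the main conceptual step; the renormalization in Part (a), while technical, follows standard lines.
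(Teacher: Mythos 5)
Your part (b) is correct. For $d>D$ the deterministic volume count is fine, and for $d=D$ your sublattice rescaling $\tilde f(x):=f(Nx)$ with $N=\lceil (c_2+1)/c_1\rceil$ is a clean way to extract an $M$-Lip \emph{injection} into $W_p(\Z^d)$ directly from the lower quasi-isometry bound; combined with Theorem \ref{main}(c) and the countable union over rational $\bc$ this gives $\P_p(E)=0$. This is genuinely different from (and arguably tidier than) the paper's route, which instead bounds $|f^{-1}(y)|$ by a constant $C$, passes to a denser percolation $W_{p'}$ via $C$-clumps, and spreads the preimages of each $y$ injectively inside the clump $K_y$ before invoking \eqref{mcoo}; your version needs no change of density and no zero--one law.

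Part (a), however, has a genuine gap. Theorem \ref{main}(a) only supplies a $2$-Lip \emph{injection} $G$ from $\Z^d$ into the good blocks, and a Lipschitz injection need not satisfy the lower bound $c_1\de(x,x')-c_2\le\rho(f(x),f(x'))$ in the definition of a quasi-isometry: injectivity only prevents exact collisions, not the collapse of arbitrarily distant points to within bounded distance (a boustrophedon filling of a half-plane is a $1$-Lip injection $\Z\to\Z^2$ that maps points at distance $2n$ to adjacent sites). So your claim that $\Psi$ has $c_1=1$, $c_2=O(N)$ does not follow from $G$ being $2$-Lip and injective, and without a lower bound $\Psi$ need not be a quasi-isometry at all. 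The repair is exactly what the paper does: invoke Theorem \ref{don} rather than Theorem \ref{main}(a). If the good-block density exceeds $1-(2(d+1))^{-2}$, you obtain a $1$-Lip function $F:\Z^{d}\to\Z_+$ whose graph $u\mapsto(u,F(u))$ consists of good blocks; since the horizontal coordinates are preserved and $F$ is $1$-Lip, this map is bi-Lipschitz ($\|u-v\|_1\le\|(u,F(u))-(v,F(v))\|_1\le 2\|u-v\|_1$), and only then does your block-filling step produce a legitimate quasi-isometry with $c_2,c_4=O(N)$. (The paper works with $r$-clumps containing at least one open site and sends each $u$ to a single representative open site, which suffices since condition (ii) is automatic for the image subspace; your insistence on placing $N^d$ points per block is unnecessary extra work.)
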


The proofs of Theorem \ref{main}(b), (c) appear, respectively, in
Sections \ref{nn} and \ref{nse}. The remaining propositions are
proved in Section \ref{ebqi}. Section \ref{openp} contains
four open problems.\vspace*{-2pt}

\section{Nearest-neighbor maps}\label{nn}

In this section we prove Theorem \ref{main}(b).
A~(self-avoiding) \textit{path} in $\Z^d$ is a
finite or infinite sequence of distinct sites,
each consecutive pair of which is at $\ell^1$-distance $1$. Let
$e_1,\ldots,e_d\in\Z^d$ be the standard basis vectors of $\Z^d$, and let
$0$ denote the origin.\vadjust{\goodbreak}
\begin{lemma}\label{congruent}
Let $x_1,\ldots,x_k\in\Z^D$ be distinct, and let
$A=A(x_1,\ldots,x_k)$ be the event that there exists a
singly-infinite path $0=y_0,y_1,\ldots$ in $\Z^D$ such that the
sites $(x_i+y_j\dvtx i=1,\ldots,k, j=0,1,\ldots)$ are distinct and
open. If $p<(2D)^{-1/k}$, then $\PP_p(A)=0$.
\end{lemma}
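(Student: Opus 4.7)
The plan is to prove this by a first-moment (union bound) argument on finite initial segments of the path. Observe first that the event $A$ is contained, for every $n\ge 1$, in the event $A_n$ that there exists a (self-avoiding) path $0=y_0,y_1,\ldots,y_n$ in $\Z^D$ such that the $k(n+1)$ sites $\{x_i+y_j:1\le i\le k,\ 0\le j\le n\}$ are distinct and all open. So it suffices to show $\P_p(A_n)\to 0$ as $n\to\infty$.

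Next I would bound the number of candidate paths. A self-avoiding path $0=y_0,y_1,\ldots,y_n$ is determined by its $n$ steps, each of which is one of the $2D$ unit vectors in $\Z^D$, so there are at most $(2D)^n$ such paths. Fix one such path for which the $k(n+1)$ translates $x_i+y_j$ are pairwise distinct (the event $A_n$ is the union over these paths); since the translates are distinct sites of $\Z^D$, the probability that all of them are open equals exactly $p^{k(n+1)}$ by independence.

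Combining these two ingredients via the union bound yields
\begin{equation*}
\P_p(A_n)\ \le\ (2D)^n\, p^{k(n+1)}\ =\ p^k\bigl(2D\,p^k\bigr)^n.
\end{equation*}
Under the hypothesis $p<(2D)^{-1/k}$ we have $2D\,p^k<1$, so the right-hand side tends to $0$ as $n\to\infty$. Since $A\subseteq A_n$ for every $n$, we conclude $\P_p(A)=0$.

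The main obstacle here is really only the bookkeeping — making sure that the condition ``the $x_i+y_j$ are distinct'' is used correctly so that the probability all these sites are open is $p^{k(n+1)}$ rather than something smaller, and that the number of candidate paths is correctly bounded. Once one recognises that $A$ can be monitored through its length-$n$ initial segments $A_n$, the rest is a routine Borel–Cantelli / first-moment estimate.
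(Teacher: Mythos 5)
Your proof is correct and follows essentially the same route as the paper's: monitor $A$ through the finite events $A_n$, bound the number of self-avoiding paths by $(2D)^n$, and use the distinctness of the translates to get the exact open-probability $p^{k(n+1)}$ (the paper writes $p^{nk}$, an immaterial difference), concluding via $2Dp^k<1$. No gaps.
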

\begin{pf}
Let $A_n$ be the event that there exists a path
$0=y_0,y_1,\ldots,y_n$ of length $n$ in $\Z^D$ such that the
sites $(x_i+y_j\dvtx i=1,\ldots,k, j=0,\ldots,n)$ are
distinct and open. Note that $A$ is the decreasing limit
of $A_n$ as $n\to\infty$. Let~$N_n$ be the number of paths
$0=y_0,\ldots, y_n$ with the properties required for~$A_n$.
Then
\[
\PP_p(A_n)\leq\E_p N_n\leq(2D)^n p^{nk}\xrightarrow0
\qquad\mbox{if }2Dp^k<1.
\]
Here,\vspace*{1pt} $(2D)^n$ is an upper bound for the number of $n$-step
self-avoiding paths $(y_j)$ starting from $0$, while for those
paths for which the sites $x_i+y_j$ are distinct,
$p^{nk}$ is the probability they are all open.
\end{pf}
\begin{pf*}{Proof of Theorem \ref{main}\textup{(b)}}
We must prove that, for any fixed $p<1$ and $D\geq2$, a.s. there
exists no
$1$-Lip injection from $\Z^2$ to $W_p(\Z^D)$.

First, suppose $f$ is a $1$-Lip injection from $\Z^2$ to
the full lattice $\Z^D$, and consider the image of a unit
square. Specifically, take $(i,j)\in\Z^2$ and let
\begin{eqnarray*}
r_1&=&f(i+1,j)-f(i,j),\qquad r_1'=f(i+1,j+1)-f(i,j+1),\\
r_2&=&f(i,j+1)-f(i,j),\qquad r_2'=f(i+1,j+1)-f(i+1,j).
\end{eqnarray*}
Note that the four vectors $r_1$, $r_2$, $r_1'$, $r_2'$ are elements
of $\{\pm e_j\dvtx j=1,\ldots,D\}$ (by the $1$-Lip property); they
satisfy $r_1+r_2'=r_1'+r_2$ (by definition); the pair $r_1$, $r_2$
are neither equal to nor negatives of each other and similarly
for $r_1$, $r_2'$ (a consequence of injectivity). It follows that
$r_1=r_1'$ and $r_2=r_2'$. Since this holds for every unit
square, for any distinct $i,i'\in\Z$, the images under $f$ of the two paths
$\{(i,j)\dvtx j\in\Z\}$ and $\{(i',j)\dvtx j\in\Z\}$ are two disjoint
self-avoiding paths that are translates of each other. (Another
consequence, which we shall not need, is that there exists
$\Delta\subset\{1,\ldots,D\}$ such that all horizontal edges have
images in $\{\pm e_j\dvtx j\in\Delta\}$, and all vertical edges have
images in the complement $\{\pm e_j\dvtx j \notin\Delta\}$.)

Let $B$ be the event that there exist $x_1,x_2,\ldots\in\Z^D$
and a self-avoiding path $0=y_0,y_1,\ldots$ in $\Z^D$ such that
the sites $(x_i+y_j\dvtx i\geq1,j\geq0)$ are distinct and open.
The above argument implies that, if there exists a $1$-Lip
injection $f\dvtx\Z^2\to W_p(\Z^D)$, then $B$ occurs. We shall now
show that $\PP_p(B)=0$ for all $p<1$ and $D\geq1$. Let $k$ be
large enough that $p<(2D)^{-1/k}$. We define $B_k$ analogously
to~$B$, except in that we now require the existence of only $k$
sites $x_1,\ldots,x_k$. Lem\-ma~\ref{congruent} implies that
$\PP_p(B_k)=0$ because $B_k$ is the countable union over all
possible $x_1,\ldots,x_k$ of the events $A(x_1,\ldots,x_k)$.
Finally, we have $B\subseteq B_k$.
\end{pf*}

\section{The case of equal dimensions}\label{nse}

In this section we prove Theorem \ref{main}(c).
We denote integer intervals by $\lp a,b \rrbr:=(a,b]\cap\Z$, et cetera.
Fix any $d\geq2$, $M\geq1$ and $p\in(0,1)$. We will prove that a.s.
there exists no $M$-Lip injection from $\Z^d$ to
$W_p(\Z^d)$.

The idea behind the proof is as follows. Suppose that $f$ is
such an injection. By a \textit{hole} we mean a cube of side
length $M$ in $\Z^d$ all of whose sites are closed (actually, a
slightly different definition will be convenient in the formal
proof, but this suffices for the current informal sketch).
Holes are rare (if $p$ is close to~$1$), but the typical
spacing between them is a fixed function of $d$,~$M$ and~$p$.
We will consider the image under $f$ of a cuboid $\llbr
1,n\rrbr^{d-1}\times\llbr1,m\rrbr\subset\Z^d$, where $m\gg n\gg1$.
We will arrange that the images of the two opposite faces $\llbr
1,n\rrbr^{d-1}\times\{1\}$ and $\llbr1,n\rrbr^{d-1}\times\{m\}$ are
far apart, and separated by a~$(d-1)$-dimensional ``surface of
holes'' (at the typical spacing). This implies that the image of
the interior of the cuboid must pass through this surface,
avoiding all the holes. To do so, the image must be in some
sense be folded up so as to be locally $(d-1)$-dimensional, and
this will give a contradiction to the injectivity of $f$ if $n$
is chosen large enough compared with the spacing of the holes.

In the case $d=2$, it is possible to formalize the above ideas
using fairly direct ad hoc geometric arguments. It is
plausible that a similar approach could be pushed through (with
substantially more difficulty) for $d=3$. For general $d$, a
less direct (but more systematic) approach appears to be
required. Specifically, the surface of holes will be
constructed using Theorem~\ref{don}, and, crucially, we will
augment it with a coloring of the nearby open sites using
exactly $d-1$ colors, in such a way that the colored sites
separate the two sides of the surface from each other, but the
sites of any given color fall into well-separated regions of
bounded size. Via the map $f$, this coloring will induce a
coloring of the cuboid that contradicts a certain topological
fact.

The following notation will be used extensively. A \textit{coloring} of
a set of sites $U\subseteq\Z^d$ is a map $\chi$
from $U$ to a finite set $Q$. A site $u\in U$ is said to have
\textit{color} $\chi(u)\in Q$. We introduce the graph
$G(U,\ell^r,k)$ having vertex set $U$ and an edge between
$u,v\in U$ if and only if $0<\|u-v\|_r\leq k$. An important
special case is the \textit{star-lattice}
$G^*=G^*_d:=G(\Z^d,\ell^\infty,1)$. Given a graph $G$ and a
coloring $\chi$ of its vertex set, a $q$-\textit{cluster} (of
$\chi$ with respect to $G$) is the vertex set of
a connected component in the
subgraph of $G$ induced by the set of vertices of color $q$.
The \textit{volume} of a cluster is defined to be the number of
its sites.

We next state the two main ingredients of the proof: a
topological result on coloring a cuboid and a result on
existence of random colored surfaces in the percolation model.
\begin{prop}[(Color blocking)] \label{tucker}
Let $d,n,m$ be positive integers, and consider a coloring
\[
\chi\dvtx\llbr1,n\rrbr^{d-1}\times\llbr
1,m\rrbr\to\{-\infty,+\infty,1,2,\ldots,d-1\}.
\]
If $\chi$ satisfies:
\begin{longlist}[(a)]
\item[(a)] all sites in $\llbr1,n\rrbr^{d-1}\times\{1\}$ have color
$-\infty$;
\item[(b)] all sites in $\llbr1,n\rrbr^{d-1}\times\{m\}$ have color
$+\infty$; and
\item[(c)] no site of color $+\infty$ is adjacent in $G^*$ to a
site of color $-\infty$,
\end{longlist}
then, for some $j\in\{1,2,\ldots,d-1\}$,
$\chi$ has a $j$-cluster with respect to $G^*$ of volume
at least $n$.
\end{prop}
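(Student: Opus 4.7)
The strategy is to project $\chi$ down to a colouring of the $(d-1)$-base $Q := \ll 1,n\rr^{d-1}$ by $d-1$ colours, apply a discrete form of Lebesgue's covering theorem derived from Tucker's Lemma, and lift the resulting large monochromatic cluster back to $C := \ll 1,n\rr^{d-1}\times\ll 1,m\rr$.

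For the projection, let $U^-$ be the $G^*$-connected component of the bottom face $\ll 1,n\rr^{d-1}\times\{1\}$ in the subgraph of $G^*$ induced on $\chi^{-1}(-\infty)$, and set $h(x) := \max\{z : (x,z)\in U^-\}$ for $x\in Q$. Hypothesis (a) guarantees $(x,1)\in U^-$ so $h(x)\geq 1$, and (b) gives $h(x)<m$. By maximality, the site $(x,h(x)+1)$ lies outside $U^-$ yet is $G^*$-adjacent to the $-\infty$-site $(x,h(x))\in U^-$; hence $\chi(x,h(x)+1)$ is neither $-\infty$ (else the site would join $U^-$) nor $+\infty$ (by hypothesis (c)), so $\mu(x):=\chi(x,h(x)+1)\in\{1,\ldots,d-1\}$ defines a colouring $\mu:Q\to\{1,\ldots,d-1\}$.

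The central combinatorial input is a discrete Lebesgue-type theorem that follows from Tucker's Lemma by a standard reduction: every colouring of the $(d-1)$-cube $Q$ by $d-1$ colours has some colour class containing a $G^*$-connected component meeting two opposite facets of $Q$, and thus of volume at least $n$. Applied to $\mu$, this yields a $G^*$-connected set $T\subseteq Q$ of common $\mu$-value $j\in\{1,\ldots,d-1\}$ with $|T|\geq n$.

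It remains to lift $T$ to a $\chi$-$j$-cluster of cardinality at least $n$ in $C$. Set $T' := \{(x,h(x)+1):x\in T\}\subseteq C$; each site of $T'$ has $\chi$-colour $j$ and $|T'|=|T|\geq n$. The chief obstacle is that $h$ need not be Lipschitz in $x$, so the lifts of $G^*$-adjacent $x,x'\in T$ are not automatically $G^*$-adjacent in $C$. The plan is to apply the same maximality-and-(c) argument to auxiliary sites at intermediate heights in the strip $\{x,x'\}\times\ll 1,m\rr$: these should jointly lie in $\chi^{-1}(j)$ and assemble a $G^*$-path of $j$-coloured sites connecting $(x,h(x)+1)$ and $(x',h(x')+1)$, placing $T'$ inside a single $\chi$-$j$-cluster of size at least $n$, and completing the proof.
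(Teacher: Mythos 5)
Your first two steps are sound: the definition of $\mu(x):=\chi(x,h(x)+1)$ is well posed and correctly lands in $\{1,\dots,d-1\}$ by the maximality of $h$ together with hypothesis (c), and the discrete Lebesgue/Hex-type theorem you invoke (every $(d-1)$-colouring of $\ll 1,n\rr^{d-1}$ has a monochromatic $G^*$-component joining two opposite facets) is a true, known statement, though you assert it rather than prove it. The fatal problem is the final lifting step, which you yourself describe only as a ``plan''. There is no reason whatsoever for the ``auxiliary sites at intermediate heights'' in the strip $\{x,x'\}\times\ll 1,m\rr$ to carry the colour $j$: the monochromaticity of $\mu$ on $T$ constrains only the single site immediately above the top of $U^-$ in each column, and the sites filling the vertical gap between $(x,h(x)+1)$ and $(x',h(x')+1)$ may carry any of the other finite colours, or colour $-\infty$ (column $x'$ can re-enter $U^-$ below $h(x')$ via a roundabout route), or even $+\infty$ if they are far from $U^-$. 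Concretely, if $h$ oscillates between heights $1$ and $m/2$ on adjacent columns (which the $G^*$-connectivity of $U^-$ permits, e.g.\ in a checkerboard pattern), the lifted set $T'$ is scattered across widely separated heights and is certainly not a single $G^*$-cluster of $\chi$; your argument supplies no mechanism forcing a $j$-coloured path between its pieces. This is not a repairable detail but the crux of the proposition: the separating surface $\partial U^-$ can be geometrically wild, and adjacency in its projection to the base does not control adjacency (or even proximity) on the surface itself.

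For contrast, the paper avoids any projection or surface analysis. Assuming all finite-colour clusters have diameter less than $n$, it embeds $\chi$ in a slightly larger cuboid $T$, assigns antipodal colours $\pm i$ to opposite boundary faces (with $\pm\infty$ renamed $\pm d$ in the interior), recolours each $i$-cluster adjacent to the face of colour $-i$ with the colour $-i$ (the smallness assumption guarantees the boundary is unaffected), and then applies Tucker's Lemma for the cuboid to produce two $G^*$-adjacent sites of opposite colours, contradicting the construction. If you want to pursue your route, you would need either to prove a Lebesgue-type theorem directly on the discrete surface $\partial U^-$ (a delicate matter for $d\ge 3$), or to abandon the projection and work $d$-dimensionally as the paper does.
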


In fact, Proposition \ref{tucker} remains valid if ``volume'' is
replaced with ``diameter,'' as we shall see.
\begin{prop}[(Colored surfaces)]\label{surface}
Fix $d \geq2$, $J\geq1$ and $p\in(0,1)$. There exist
constants $K,C<\infty$ (depending on $d$, $J$ and $p$) such
that $\PP_p$-a.s. there is a (random) coloring
\[
\lambda\dvtx W_p(\Z^d)\to\{-\infty,+\infty,1,2,\ldots,d-1\}
\]
of the
open sites of $\Z^d$ with the following properties:
\begin{longlist}[(a)]
\item[(a)] No site of color $+\infty$ is adjacent to a site of
color $-\infty$ in $G(W_p(\Z^d)$, $\ell^\infty, J)$.
\item[(b)] For each $j\in\{1,2,\ldots,d-1\}$, every $j$-cluster
with respect to $G(W_p(\Z^d)$, $\ell^\infty,J)$ has
volume at most $K$.
\item[(c)] There exists a (random) nonnegative real-valued function
$g\dvtx\Z^{d-1}\to[0,\infty)$, with the Lipschitz property
that $|g(u)-g(v)|\leq d^{-1} \|u-v\|_1$ for all
$u,v\in\Z^{d-1}$, such that all open sites in
\[
S^-:=\{ (u,z)\dvtx u\in\Z^{d-1}, z < g(u)\}
\]
are colored $-\infty$, while all open sites in
\[
S^+:=\{ (u,z)\dvtx u\in\Z^{d-1}, z>g(u)+C\}
\]
are colored $+\infty$.
\end{longlist}
\end{prop}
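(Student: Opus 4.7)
The plan is to combine a renormalization/block argument with Theorem~\ref{don} and then colour open sites by an explicit local construction inside each block.

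First I partition $\Z^d$ into cubes $B_{\vec a}$ of side length $N$, to be chosen large. A cube is declared \emph{good} if the configuration on $B_{\vec a}$ and a fixed-size neighbourhood realises a prescribed finite pattern that supports the local colouring described below. Because the event is local and $p\in(0,1)$, the marginal probability of goodness can be pushed arbitrarily close to $1$ by taking $N$ large, and the goodness field is finite-range dependent. By the Liggett--Schonmann--Stacey stochastic-domination theorem, for $N$ large enough this field dominates an i.i.d.\ Bernoulli site process on the renormalized copy of $\Z^d$ with parameter $p'>1-(2d)^{-2}$. Theorem~\ref{don} now applies at the block scale and yields a $1$-Lipschitz function $G:\Z^{d-1}\to\Z_+$ that hits only good blocks. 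By choosing block aspect ratio so that horizontal block spacings are $d$ times the vertical ones, and linearly smoothing $G$, I obtain a real-valued $d^{-1}$-Lipschitz function $g:\Z^{d-1}\to[0,\infty)$, which fulfils the Lipschitz requirement of item~(c).

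Next I define $\lambda(x):=-\infty$ for open $x=(u,z)$ with $z<g(u)$ and $\lambda(x):=+\infty$ for $z>g(u)+C$, where $C$ is a constant depending on $d,J,N$. It then remains to colour the open sites inside the slab $g(u)\le z\le g(u)+C$, which lie inside the good blocks on the $G$-surface. Within each such block the good pattern prescribes $d-1$ disjoint horizontal colour-$j$ \emph{tiles} (each of vertical thickness at least $J+1$ and of horizontal extent strictly less than $N-J$ in every $(d-1)$-horizontal direction, so that the tile is surrounded by a horizontal margin of width $>J/2$ on each side), stacked at distinct vertical offsets inside the block. The complement of the tiles inside the slab part of the block is filled by $\pm\infty$ buffers: a $-\infty$ region at the bottom, a $+\infty$ region at the top, and between two consecutive sheets a $+\infty$ layer immediately above the lower sheet and a $-\infty$ layer immediately below the upper sheet, with at least $J+1$ sites of vertical separation. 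The horizontal margins around each tile are filled by the same $\pm\infty$ prescribed by the vertical level. Condition~(a) then holds by construction, since every $\pm\infty$ pair is separated either vertically by a tile of thickness $\ge J+1$ or horizontally by a tile-margin of width $>J$. For~(b), observe that a colour-$j$ cluster cannot escape its block: an $\ell^\infty$-adjacent good block either sits at the same block-height, in which case the two tiles are separated horizontally by the combined margins of width $>J$, or differs by $1$ in block-height (the only other option by the $1$-Lipschitz property of $G$), in which case the tile of one block sits opposite a buffer of the other. Each colour-$j$ cluster is therefore confined to a single block, giving the uniform bound $K:=N^{d-1}(J+1)$.

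The principal obstacle is the explicit design of the local pattern and the finite combinatorial-geometric verification that the colourings assembled from neighbouring blocks are consistent in every possible $G$-configuration of mutually $\ell^\infty$-adjacent good blocks---especially at the ``steps'' of $G$ where neighbouring blocks differ in height and the horizontal margins on one block have to match the vertical buffers of the other. This is arranged by choosing the good event to inspect an entire face-and-corner collar of $B_{\vec a}$ and by dictating the $\pm\infty$ assignment on this collar, so that the pattern determines the colouring up to and including the shared boundaries of adjacent blocks. The constants $C$ and $K$ then emerge as explicit functions of $d$, $J$, and $N$.
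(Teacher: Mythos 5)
Your block/renormalization skeleton (choose a large scale, define a high-probability local event, apply Theorem~\ref{don} to the renormalized process, then smooth the resulting discrete surface to obtain the $d^{-1}$-Lipschitz $g$) matches the paper's strategy, but the local colouring inside a good block is where the proof really lives, and your version of it fails. Each of your tiles is strictly contained in the horizontal cross-section of its block and surrounded by a $\pm\infty$ margin of width $>J/2$; consequently a vertical column through such a margin (for instance through a corner region of a block, or through the crack of combined width $>J$ between the tiles of two horizontally adjacent blocks) meets no tile at any level. Along such a column the colour must nevertheless pass from $-\infty$ (below $g$) to $+\infty$ (above $g+C$), so somewhere two vertically adjacent open sites carry $-\infty$ and $+\infty$, violating (a). Rearranging the $d-1$ tiles horizontally cannot repair this: if your construction worked it would in particular produce a colouring of the \emph{full} lattice $\Z^d$ satisfying (a)--(c), and Proposition~\ref{tucker} applied to the identity map, exactly as in the proof of Theorem~\ref{main}(c), shows that no such colouring exists. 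The underlying obstruction is that colouring $\Z^{d-1}$ so that every colour class has only bounded clusters requires $d$ colours, and you only have $d-1$.

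The missing idea is that the $d$-th colour must be supplied by the \emph{closed sites} of the percolation configuration. The paper colours $\Z^{d-1}$ with $d$ colours $\{0,1,\ldots,d-1\}$ so that all clusters are bounded (Lemma~\ref{tile}); colours $1,\ldots,d-1$ play the role of your tiles, while above each small colour-$0$ cube the separation between $+\infty$ and $-\infty$ is achieved by an actual cube of closed sites (a ``hole''). Accordingly the good event for a block is not ``realises a prescribed finite pattern'' (whose probability tends to $0$, not $1$, as the block grows) but ``contains a hole somewhere in a prescribed tall, thin cell'', whose probability does tend to $1$; these events are independent across cells, so Theorem~\ref{don} applies directly and no Liggett--Schonmann--Stacey domination is needed. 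Your write-up never requires good blocks to contain closed sites and never uses closed sites in the separating surface, so the gap is not a detail of the deferred ``finite combinatorial-geometric verification'' but a structural one.
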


In (c) above, note in particular that all
open sites in the half-space $\Z^{d-1}\times\lp{-\infty},0\rp$
are colored $-\infty$.

The proof of Theorem \ref{main}(c) will proceed by playing
Propositions \ref{tucker} and~\ref{surface} against one another
to obtain a contradiction. The number of permitted colors is
crucial---if one color more were added to $1,\ldots,d-1$,
then the conclusion of Proposition \ref{tucker} would no longer
hold, while with one color fewer, the conclusion of
Proposition \ref{surface} would not hold. It should also be
noted that the use of the star-lattice $G^*$ is essential in
Proposition \ref{tucker}---the statement does not hold for
the nearest-neighbor lattice $G(\Z^d,\ell^1,1)$. Another key
point is that the presence of closed sites is essential for
Proposition \ref{surface}---the conclusion does not hold when
$p=1$ (for any $K,C$), since such a coloring would give a
contradiction to Proposition \ref{tucker}.

The choice of the Lipschitz constant $d^{-1}$ in Proposition
\ref{surface}(c) above is relatively unimportant---the result\vadjust{\goodbreak}
would hold for any positive constant, while any constant less
than $(d-1)^{-1}$ would suffice for our application (see Lemma
\ref{intersection} below).

Our proof of Proposition \ref{tucker} will use Tucker's lemma,
a beautiful result of topological combinatorics. The general
version of \cite{lefschetz,tucker} applies to triangulations of
a ball, and is a close relative of the Borsuk--Ulam theorem
(see~\cite{matousek} for background). We need only a special case,
for the cuboid, which is also proved in \cite{baker}.

For $t\in\llbr1,\infty\rp^d$, consider the integer cuboid
$T=T(t):=\llbr0,t_1 \rrbr\times\cdots\times\llbr
0,t_d\rrbr\subset\Z^d$ with opposite corners $0$ and $t$, and
define the boundary $\partial T:=T\setminus[\lp0,t_1 \rp
\times\cdots\times\lp0,t_d\rp]$. We say that boundary sites
$x,y\in\partial T$ are \textit{antipodal} if $x+y=t$.\vspace*{-2pt}
\begin{lemma}[(Tucker's lemma for the cuboid \cite{baker})]\label{tucker-cube}
Let $T\subset\Z^d$ be a
cuboid as above, and suppose $\beta\dvtx T\to\{\pm1,\ldots,\pm d\}$
is a coloring such that for each antipodal pair
$x,y\in\partial T$ we have $\beta(x)=-\beta(y)$. Then there
exist $u,v\in T$ that are adjacent in $G^*$ (and, in fact, that
satisfy $u_i\leq v_i\leq u_i+1$ for all $i$) such that
$\beta(u)=-\beta(v)$.\vspace*{-2pt}
\end{lemma}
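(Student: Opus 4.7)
The plan is to deduce Lemma \ref{tucker-cube} from the classical Tucker lemma for antipodally symmetric triangulations of a topological $d$-ball, applied to the Kuhn (staircase) triangulation of the cuboid $T$. First I would construct this triangulation by subdividing each integer unit subcube $\ll a_1, a_1+1\rr \times \cdots \times \ll a_d, a_d+1\rr$ of $T$ into $d!$ simplices, one for each permutation $\sigma$ of $\{1,\ldots,d\}$, with vertex sequence
\[
a,\ a+e_{\sigma(1)},\ a+e_{\sigma(1)}+e_{\sigma(2)},\ \ldots,\ a+e_1+\cdots+e_d.
\]
Any two vertices $u,v$ of such a simplex satisfy $u_i\le v_i\le u_i+1$ coordinate-wise (possibly after swapping $u$ and $v$), so every edge of the resulting simplicial complex is precisely of the form promised in the conclusion. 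The triangulations of adjacent subcubes agree on their shared face, because the Kuhn triangulation restricted to any face of a unit cube is itself a Kuhn triangulation (of that face).

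Next I would verify that this complex is invariant under the antipodal involution $\iota:x\mapsto t-x$. A direct calculation shows that $\iota$ carries the Kuhn simplex at $a$ indexed by $\sigma$ to the Kuhn simplex at $t-a-(e_1+\cdots+e_d)$ indexed by the reverse permutation $i\mapsto\sigma(d+1-i)$. Hence the whole complex is $\iota$-invariant; in particular its restriction to $\partial T$ is antipodally symmetric. After recentering $T$ at $t/2$, one obtains a triangulation of a topological $d$-ball whose boundary is symmetric under the usual antipodal map $x\mapsto -x$.

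The hypothesis $\beta(x)=-\beta(y)$ for antipodal $x,y\in\partial T$ is precisely the boundary condition required by the classical Tucker lemma. Applying that lemma to the above triangulation and the labeling $\beta$ produces vertices $u,v\in T$ joined by an edge of the complex with $\beta(u)=-\beta(v)$; by construction, these $u,v$ are $G^*$-adjacent and satisfy $u_i\le v_i\le u_i+1$ for each $i$, as required.

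The main obstacle is the invocation of the classical Tucker lemma itself. Its standard proofs proceed either by an appeal to the Borsuk--Ulam theorem, via a piecewise linear equivariant extension of $\beta$ to a map from the ball into $\R^{d-1}$, or by a direct combinatorial parity argument that counts ``door'' $(d-1)$-simplices whose vertex labels exhaust $\{\pm 1,\ldots,\pm(d-1)\}$ and pairs them through the $d$-simplices they bound; see \cite{matousek}. Once that classical statement is granted, the geometric verification of the Kuhn triangulation's two key properties above is routine, and the induction is not needed.
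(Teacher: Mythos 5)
The paper does not actually prove Lemma \ref{tucker-cube}: it is quoted with a citation to Baker for the cuboid case and to Lefschetz/Tucker/Matou\v{s}ek for the general ball version, so there is no in-paper argument to compare against. Your reduction is a correct and reasonable way to supply the missing derivation. The three facts you need about the Kuhn (Freudenthal) triangulation all check out: every edge of a staircase simplex joins vertices $u,v$ with $u_i\le v_i\le u_i+1$ (hence $G^*$-adjacent, and exactly the form asserted in the lemma); the restriction of the triangulation of a unit cube to a facet is the Kuhn triangulation of that facet, so the cubewise triangulations glue into a simplicial complex on $T$; and your computation that $x\mapsto t-x$ sends the simplex at $a$ indexed by $\sigma$ to the simplex at $t-a-(e_1+\cdots+e_d)$ indexed by the reversed permutation is right, so the complex is equivariant and in particular its restriction to $\partial T$ is antipodally symmetric (note also that the involution is free on $\partial T$, since any $x$ with $2x=t$ is interior). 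Transporting to the standard ball by radial projection from $t/2$ and applying the classical Tucker lemma then yields a complementary edge of the complex, which is exactly the conclusion. The only caveat is the one you flag yourself: the whole argument rests on the classical Tucker lemma as a black box, but that is precisely the status it has in the paper as well, so your proposal is at the same level of self-containment as the source and is, if anything, more explicit about how the cuboid special case follows from the general statement.
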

\begin{pf*}{Proof of Proposition \ref{tucker}}
Throughout the proof, adjacency and clusters refer to $G^*$.
The ($\ell^\infty$-)\textit{diameter} of a cluster is the maximum
$\ell^\infty$-distance between two of its sites. It suffices to
show that for a coloring $\chi$ satisfying the given
conditions, there is a $j$-cluster of diameter at least $n$ for
some $j\neq\pm\infty$. Suppose that this is false. We will
construct a modified coloring that leads to a contradiction.

First define a coloring $\chi'$ of the larger cuboid $T:=\llbr
0,n+1\rrbr^{d-1}\times\llbr0,m+1\rrbr$ as follows. Let $\chi'$
agree with $\chi$ on $T\setminus\partial T$, except with color
$\infty$ everywhere changed to $d$, and $-\infty$ changed to
$-d$. Color $\partial T$ as follows. For each $i=1,\ldots,d$,
let $\chi'$ assign color $-i$ to the face $\{x\in T\dvtx x_i=0\}$,
and color $+i$ to the antipodal face (this rule creates
conflicts at the intersections of faces; for definiteness
assign such sites the candidate color of smallest absolute
value). Thus $\chi'$ satisfies the condition of Lemma
\ref{tucker-cube} on the boundary.

Now let $\beta$ be the coloring of $T$ obtained by modifying
$\chi'$ as follows. For each $i=1,\ldots,d-1$, recolor with
color $-i$ all $i$-clusters that are adjacent to the face
colored $-i$. Since there were no $i$-clusters of diameter as
large as $n$ in $\chi$, this does not affect the colors on
$\partial T$. Hence Lemma \ref{tucker-cube} applies, so there
are adjacent sites $u,v\in T$ with $\beta(u)=-\beta(v)$, which
contradicts the manner of construction of $\beta$.\vspace*{-2pt}
\end{pf*}

The proof of Proposition \ref{surface} relies on Theorem
\ref{don} concerning Lipschitz surfaces in percolation,
together with the following deterministic fact.\vspace*{-2pt}
\begin{lemma}[(Periodic coloring)]\label{tile}
For any integers $d\geq1$ and $R\geq2d$, there exists a
coloring $\alpha\dvtx\Z^d\to\{0,1,\ldots,d\}$ with the following
properties:
\begin{longlist}[(a)]
\item[(a)] The coloring is periodic with period $R$ in each
dimension; that is, $\alpha(x+Ry)=\alpha(x)$ for all\vadjust{\goodbreak}
$x,y\in\Z^d$.
\item[(b)] For each $j\in\{0,1,\ldots,d\}$, every $j$-cluster with
respect to $G^*$ has volume at most $R^d$.
\item[(c)] The $0$-clusters with respect to $G^*$ are precisely the
cubes $Rx+\llbr-(d-1),(d-1)\rrbr^d$, for $x\in\Z^d$.\vspace*{-3pt}
\end{longlist}
\end{lemma}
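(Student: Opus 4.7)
The plan is to construct $\alpha$ as an explicit periodic function on $\Z^d$ of period $R$ and then verify (a)--(c) directly. For $x\in\Z^d$, let $y_i=y_i(x)$ be the unique representative of $x_i$ modulo $R$ in the integer interval $\ll -(d-1),R-(d-1)\rr$, call coordinate $i$ \emph{central} if $y_i\in E:=\ll -(d-1),d-1\rr$ and \emph{peripheral} otherwise, and set $\alpha(x)=0$ precisely when every coordinate is central. The set of colour-$0$ sites is then $\bigcup_{z\in\Z^d}(Rz+E^d)$, and the peripheral region is partitioned into the remaining colours $1,\ldots,d$ according to a rule described below.

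Property (a) is immediate by construction. For (c), each $Rz+E^d$ is a $G^*$-connected discrete cube of side $2(d-1)$; two distinct such cubes differ by a nonzero multiple of $R$ along some coordinate $i$, and their nearest sites along that axis are separated by $R-2(d-1)\ge 2$ integer steps, so they lie at $\ell^\infty$-distance at least $2$ and are not $G^*$-adjacent. Hence they form distinct $0$-clusters, giving (c), and since $(2(d-1))^d\le(2d)^d\le R^d$ we obtain (b) for $j=0$.

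The substantive step is (b) for $j\in\{1,\ldots,d\}$. By periodicity, a colour-$j$ cluster has bounded volume if and only if it does not wind around the periodic structure, i.e.\ is contained in a single translate of the fundamental cell; once confined there, its volume is at most $R^d$ automatically. The plan is therefore to choose the peripheral assignment so that for every $j$, no colour-$j$ site on any facet of the fundamental cell has a colour-$j$ $G^*$-neighbour on the matching facet of an adjacent cell. Concretely, I propose letting $\alpha(x)$ depend on the \emph{profile} $(\epsilon_1,\ldots,\epsilon_d)\in\{-,0,+\}^d$ recording whether each $y_i$ is lower-peripheral, central, or upper-peripheral, and assigning the $3^d-1$ non-zero profiles to the $d$ colours in a way that breaks the corner-to-corner symmetries responsible for diagonal wrap-arounds across cell boundaries.

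The main obstacle is the design and verification of this profile-to-colour assignment. The constraint is tightest at $R=2d$, where the peripheral slab in each coordinate has thickness only $2$, leaving very little room for the required $G^*$-separations, while only $d$ colours are available to distinguish all non-central profiles. I expect the bulk of the proof to consist in writing down an explicit assignment (likely via induction on $d$ or a direct combinatorial formula on profiles) and performing the boundary case analysis, facet by facet and colour by colour, that confirms no $j$-component can cross a cell boundary.
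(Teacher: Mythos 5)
You correctly identify the colour-$0$ set and dispose of (a) and (c), which agrees with the paper. But the heart of the lemma is (b) for $j\in\{1,\ldots,d\}$, and there your proof stops exactly where the real work begins: no explicit profile-to-colour assignment is given, only the expectation that one exists. Worse, the framework you propose --- colouring a peripheral site according to its coarse profile in $\{-,0,+\}^d$, i.e.\ according only to \emph{which} coordinates lie in the fixed central window $E$ --- provably cannot succeed in the extremal case $R=2d$ that the lemma allows. Take $d=2$, $R=4$: the peripheral residue class in each coordinate is the single residue $2$, so there is no $-/+$ distinction and only three non-zero profiles remain for two colours. The all-peripheral site $(2,2)$ is $G^*$-adjacent to $(2,1)$ and $(2,3)$ (profile: first coordinate peripheral, second central) and to $(1,2)$ and $(3,2)$ (the transposed profile). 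Whichever of the two colours $(2,2)$ receives, it glues together the two adjacent one-coordinate-peripheral blocks of that colour on opposite sides, and iterating along the column (or row) produces an unbounded monochromatic cluster. So no assignment of profiles to colours works; the obstruction is structural, not a matter of cleverer bookkeeping. (A smaller point: a bounded cluster need not lie in a single fundamental cell, so even granting separation across facets, the volume bound $R^d$ needs the sharper containment statement.)

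The paper's construction circumvents this by making the ``central window'' depend on the colour: it partitions $\Z^d$ into $k$-slices $Rx+(I_1\times\cdots\times I_d)$ with each $I_i\in\{\{0\},\ll 1,R-1\rr\}$, thickens each $k$-slice into a $k$-slab by replacing $\{0\}$ with $\ll -a_k,a_k\rr$ where $a_k=d-1-k$ is \emph{strictly decreasing} in $k$, and sets $\alpha(x)$ to be the least $k$ with $x$ in some $k$-slab. Thus the colour of a site depends not only on which coordinates are near a multiple of $R$ but on \emph{how near}, and any $G^*$-step between two distinct $k$-slabs is blocked by sites that already received a smaller colour; each $k$-slab has volume $(R-1)^k(2a_k+1)^{d-k}<R^d$. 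Your site $(2,2)$ then correctly receives colour $2$ rather than a colour shared with the strips it touches. To repair your argument you would need to replace the fixed central/peripheral dichotomy by such a nested family of windows, at which point you have essentially reconstructed the paper's proof.
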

\begin{pf}
The construction is illustrated in Figure \ref{slabs}. Define
a \textit{slice} to be any set of sites of the form
%
\begin{figure}

\includegraphics{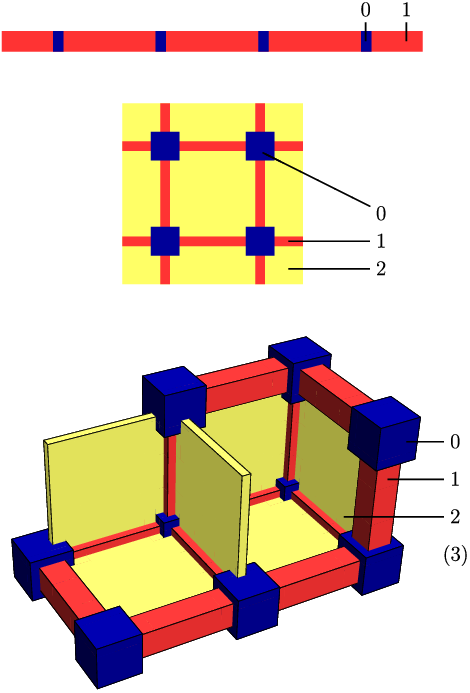}

\caption{Part of the coloring $\alpha$ of Lemma \protect\ref{tile}, for
$d=1$ \textup{(top)}, $d=2$ \textup{(middle)}, $d=3$ (\textup{bottom}; color~$3$ is shown as transparent,
and only selected slabs are shown).}
\label{slabs}\vspace*{-3pt}
\end{figure}
$Y=Rx+(I_1\times\cdots\times I_d)$, where $x\in\Z^d$ and each
$I_i$ is either $\{0\}$ or $\llbr1,R-1\rrbr$. If $\llbr1,R-1\rrbr$
appears $k$ times in this product then we call $Y$ a \mbox{$k$-\textit
{slice}}. The set of all slices forms a partition of~$\Z^d$. Let
$a_k:=d-1-k$. For a $k$-slice $Y$, define the associated
$k$-\textit{slab} to be the set obtained from $Y$ by replacing
each occurrence of $\{0\}$ in the product
$I_1\times\cdots\times I_d$ with $\llbr-a_k,a_k\rrbr$ (thus
``thickening'' the slice by distance $a_k$). We now define the
coloring: for each site $x$, let $\alpha(x)$ be the smallest
$k$ for which~$x$ lies in some $k$-slab.\vadjust{\goodbreak}

The required properties (a) and (c) are immediate [the cubes
in (c) are precisely the $0$-slabs]. For (b), note that any
$k$-cluster is contained within a~single $k$-slab; it is
straightforward to check that, for any given $k$,
any connection in $G^*$ between
two different $k$-slabs is prevented by sites of smaller
colors (here it is important that $a_k$ is strictly decreasing
in $k$). The volume of a $k$-slab is
$(R-1)^k(2a_k+1)^{d-k}< R^d$.\vspace*{-2pt}
\end{pf}

In the following, we sometimes refer to the $d$ coordinate as
vertical, with positive and negative senses being up and down,
%
\begin{figure}[b]

\includegraphics{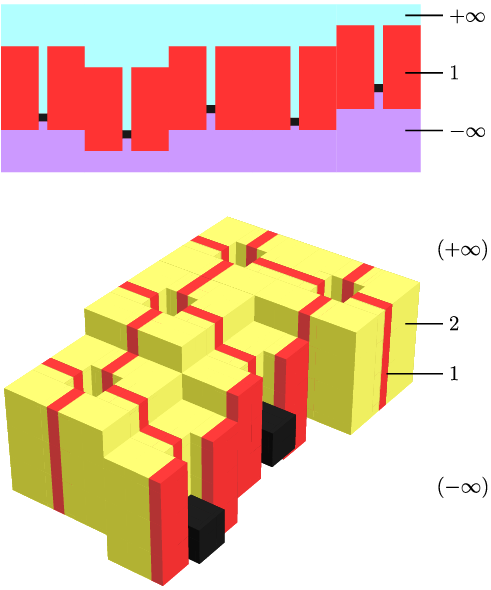}

\caption{Part of the random coloring $\lambda$ of
Proposition \protect\ref{surface}, for $d=2$
\textup{(top)}, and $d=3$ (\textup{bottom}, with colors $\pm\infty$ shown as transparent).
Holes are shown as black.}
\label{fig-surf}\vspace*{-2pt}
\end{figure}
respectively, and the other coordinate directions as
horizontal.\vspace*{-2pt}
\begin{pf*}{Proof of Proposition \ref{surface}}
See Figure \ref{fig-surf} for an illustration of the
construction. Let $L$ be a large constant, a multiple of $J$,
to be determined later, and let $\alpha$ be the coloring from
Lemma \ref{tile} with parameters $d-1$ and $R:=L/J$. Let
$\alpha'$ be the coloring obtaining by dilating $\alpha$ by a
factor $J$; that is, for $u\in\Z^{d-1}$, let
$\alpha'(u)=\alpha([ u/J])$ (where $[v]$ denotes $v$ with each
co-ordinate rounded to the nearest integer, rounding up in the case
of ties).\vadjust{\goodbreak} Note from property (a) in Lemma \ref{tile} that
$\alpha'$ has period $L$ in each dimension, while by (b),
for $j\in\{0,1,\ldots,d-1\}$, each $j$-cluster of $\alpha'$
with respect to $G(\Z^{d-1},\ell^\infty,J)$ has volume at most
$L^{d-1}$. Write $r:=J(2d-3)$. From Lemma~\ref{tile}(c),
the $0$-clusters of $\alpha'$ are $(d-1)$-dimensional cubes of
side length $r$ centred (approximately) at the elements of the
lattice $L\Z^{d-1}$.

For $u\in\Z^{d-1}$ we will use $\alpha'(u)$ to determine the
colors (other than $\pm\infty$) assigned by $\lambda$ to sites
in the vertical column $\{u\}\times\Z$. Colors $1,\ldots,d-1$
will be used unchanged, while color $0$ will be treated in a
different way.

We now introduce a renormalized percolation process, starting
with certain sets to be used in its definition. For a site
$x=(x_1,\ldots, x_d)\in\Z^d$, write
$\underline{x}:=(x_1,\ldots,x_{d-1})\in\Z^{d-1}$ and
$\overline{x}:=x_d$, so that $x=(\underline{x},\overline{x})$.
Let $\underline{C}_{\underline{x}}\subset\Z^{d-1}$ be the
$0$-cluster of $\alpha'$ centred at $L\underline{x}$. Let
$s:=\lfloor L/d\rfloor$ (where $\lfloor\cdot\rfloor$ denotes
the integer part). Let $\overline{C}_{\overline{x}}$ be the
interval $\llbr s\overline{x},s(\overline{x}+1)\rp\subset\Z$.
Define the \textit{cell} corresponding to $x\in\Z^d$ to be the
set of sites $C_x:=\underline{C}_{\underline{x}}\times
\overline{C}_{\overline{x}}$. Thus, each cell is a cuboid of
height $s$, and side length $r$ in each horizontal dimension.
The centers of the cells are spaced at distance $s$ vertically
(so that they abut each other), and at distance $L$
horizontally.

Define a \textit{hole} to be any cube of the form $z+\llbr
1,r\rrbr^d$, where $z\in\Z^d$, all of whose sites are closed in
the percolation configuration. We say that the cell $C_x$ is
\textit{holey} if it contains some hole as a subset. Now we
return to the issue of choosing $L$. Since a hole has volume
$r^d$ (a function of $J$ and $d$), and a cell has height
$s=\lfloor L/d\rfloor$, we may choose $L$ a sufficiently large
multiple of $J$ (depending on $J$, $d$ and $p$) so that the
probability that a cell is holey exceeds $1-(2d)^{-2}$. For
later purposes, ensure also that $L$ is large enough that $s>J$
and $\lfloor(L-r)/2\rfloor>J$. By Theorem \ref{don}, there
exists a.s. a $1$-Lip function $F\dvtx\Z^{d-1}\to\Z_+$, such that
all the cells $C_{(u,F(u))}$ for $u\in\Z^{d-1}$ are holey.\looseness=-1

We specify next a set of sites surrounding each of the
holey cells considered above, to be colored according to
$\alpha'$. For any $\underline{x}\in\Z^{d-1}$, let
$\underline{B}_{\underline{x}}$ be the cube $\{v\in\Z^{d-1}\dvtx
[v/L]=\underline{x}\}$ (so that these cubes partition
$\Z^{d-1}$). Let $\overline{B}_{\overline{x}}$ be the interval
$\llbr s\overline{x},s\overline{x}+L\rp$. Define the \textit{block}
corresponding to $x\in\Z^d$ to be the set of sites
$B_x:=\underline{B}_{\underline{x}}\times
\overline{B}_{\overline{x}}$. Thus $B_x$ is a cube of side $L$
which contains the cell~$C_x$ (at its bottom-center).

Now we define the coloring $\lambda$. For each $u\in
\Z^{d-1}$, call the block $B_{(u,F(u))}$ \textit{active}. To
each open site $y\in B_{(u,F(u))}$, assign the color
$\alpha'(\underline{y})$, provided this is one of the colors
$1,2,\ldots,d-1$. For the remaining sites $y$ in the active
block [those satisfying $\alpha'(\underline{y})=0$],\vspace*{1pt} we proceed
as follows. Since the cell is holey, choose one hole
$H_u\subset C_{(u,F(u))}$. Since the sites in $H_u$ are closed,
they receive no colors. Assign color $\infty$ to all open
sites in the block that lie above the hole $H_u$, and assign
color $-\infty$ to those that lie below $H_u$. (We say that a
site $x$ lies \textit{above} a set $S$ if there exists $y\in S$
with $\underline{x}=\underline{y}$, and for all such $y$ we
have $\overline{x}>\overline{y}$; \textit{below} is defined
similarly with the inequality reversed.) We have assigned
colors to all open sites lying in active blocks. Finally,
assign color $\infty$ to all open sites that lie above some
active block, and color $-\infty$ to all those that lie below
some active block.\vadjust{\goodbreak}

Now we must check that the coloring $\lambda$ has all the
claimed properties. For property (b), note first that if the
function $F$ were constant, then each $j$-cluster for
$j=1,\ldots,d-1$ would have volume at most $L^{d-1}\times
L=L^d$, since the coloring $\alpha'$ has merely been
``thickened'' vertically to thickness $L$. The effect
of taking a nonconstant $F$ is to displace the active blocks
in the vertical direction, and this clearly cannot make these
clusters any larger, so we can take $K=L^d$.

Property (c) follows easily from the Lipschitz property of
$F$. The constant $d^{-1}$ arises because for
$u,v\in\Z^{d-1}$ with $\|u-v\|_1=1$, the centers of the
corresponding blocks are at horizontal displacement $L$ from
each other, and vertical displacement at most $s\leq L/d$. Once
the function $g$ is determined for the centers of the blocks,
it can be defined elsewhere by linear interpolation.

To check property (a), suppose on the contrary that there exist
two sites $x,y$ with respective colors $+\infty,-\infty$
within $\ell^\infty$-distance $J$ of each other. If there is a
single active block such that both $x$ and $y$ lie above, below
or within it, this contradicts the presence of a hole (which
has side length $r>J$) in the corresponding cell. Also, if one
of $x,y$ lies within an active block, then the other cannot lie
above, below or within a different active block, since
$\lfloor(L-r)/2\rfloor>J$. Therefore the only other case to
consider is that $x$ and $y$ lie, respectively, above and below
two different active blocks, say $B_{(u,F(u))}$ and
$B_{(v,F(v))}$, for some $u,v\in\Z^{d-1}$. In this case we must
have $\|u-v\|_\infty=1$ and therefore $|F(u)-F(v)|\leq
\|u-v\|_1\leq d-1$, so the height intervals
$\overline{B}_{F(u)}$ and $\overline{B}_{F(v)}$ overlap by at
least $L-(d-1)s\geq s>J$, giving again a contradiction.
\end{pf*}

To complete the proof of Theorem \ref{main}(c) we will need the
following simple geometric fact in order to find an
appropriate separating surface. For a vector
$x=(x_1,\ldots,x_d)\in\R^d$, write $\widehat{x}_r$ for the
$(d-1)$-vector obtained by dropping the $r$-coordinate.
\begin{lemma}\label{intersection}
Let $a_{\pm1},\ldots,a_{\pm d}$ be positive constants and
define for $i=1,\ldots,d$ the sets
\begin{eqnarray*}
A_i&:=&\{x\in\R^d\dvtx x_i \leq d^{-1} \|\widehat{x}_i\|_1+a_i\}; \\
A_{-i}&:=&\{x\in\R^d\dvtx x_i \geq-d^{-1} \|\widehat{x}_i\|_1-a_{-i}\}.
\end{eqnarray*}
Then $\bigcap_{i=\pm1,\ldots,\pm d} A_i$ is bounded.
\end{lemma}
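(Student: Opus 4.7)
The plan is to exploit the fact that each constraint forces $|x_i|$ to be controlled by the sum of the \emph{other} $|x_j|$'s with a favourable factor of $d^{-1}$, and then to sum these inequalities so that the total $\|x\|_1$ absorbs itself.

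First I would observe that for any $x\in\bigcap_{i=\pm 1,\ldots,\pm d} A_i$, combining the defining inequalities of $A_i$ and $A_{-i}$ for each fixed $i\in\{1,\ldots,d\}$ yields
\[
|x_i| \;\leq\; d^{-1}\,\|\widehat{x}_i\|_1 + b_i,
\]
where $b_i:=\max(a_i,a_{-i})$. Next I would sum these $d$ inequalities over $i$. The left-hand side becomes $\|x\|_1$, and for the right-hand side I would use the identity
\[
\sum_{i=1}^d \|\widehat{x}_i\|_1 \;=\; \sum_{i=1}^d \sum_{j\neq i}|x_j| \;=\; (d-1)\|x\|_1,
\]
obtained by counting how often each $|x_j|$ appears.

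This produces
\[
\|x\|_1 \;\leq\; \tfrac{d-1}{d}\,\|x\|_1 + \sum_{i=1}^d b_i,
\]
and rearranging gives $\|x\|_1 \leq d \sum_{i=1}^d b_i$, so the intersection is contained in an $\ell^1$-ball of finite radius and is therefore bounded. There is no real obstacle here: the key arithmetic point is simply that the Lipschitz-type constant $d^{-1}$ in the definition of the $A_i$'s, which gives the factor $\frac{d-1}{d}<1$ after summing, is strictly less than the threshold $(d-1)^{-1}$ mentioned after the statement of Proposition~\ref{surface}. Any constant strictly smaller than $(d-1)^{-1}$ in the definitions of $A_i$, $A_{-i}$ would yield exactly the same conclusion by the same argument.
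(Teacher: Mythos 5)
Your proof is correct and is essentially identical to the paper's: both combine the $A_i$ and $A_{-i}$ constraints to bound $|x_i|$, sum over $i$ using $\sum_i\|\widehat{x}_i\|_1=(d-1)\|x\|_1$, and absorb the $\tfrac{d-1}{d}\|x\|_1$ term (the paper merely normalizes all $a_i$ to a common value $a$ first, where you keep $b_i=\max(a_i,a_{-i})$). No further comment is needed.
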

\begin{pf}
We may assume without loss of generality that the $a_i$ are all
equal, to $a$ say. For $x\in A_i\cap A_{-i}$ we have
$|x_i|\leq d^{-1} \|\widehat{x}_i\|_1+a$, hence for $x$ in
the given intersection, summing the last inequality over $i$
gives
\[
\|x\|_1\leq\frac{d-1}{d}\|x\|_1+da,
\]
hence
$\|x\|_1\leq d^2 a$.\vadjust{\goodbreak}
\end{pf}
\begin{pf*}{Proof of Theorem \ref{main}\textup{(c)}}
Fix $d\geq2$, $M\geq1$ and $p\in(0,1)$, and suppose that $f$
is an $M$-Lip injection from $\Z^d$ to $W_p(\Z^d)$. Let $K$ be
the constant from Proposition \ref{surface} for the given
values of $p$, $d$ and with $J:=dM$. Let $n:=K+1$. Let $N$ be
large enough so that the image $f(\llbr1,n\rrbr^{d-1}\times\{1\})$
is a subset of $\llbr-N,N\rrbr^d$.

Now apply Proposition \ref{surface} (again with parameters $p$,
$d$ and $J=dM$), but to the translated lattice having its
origin at $(N+1)e_d$, to obtain (a.s.) a~coloring of
$W_p(\Z^d)$ in which all open sites in $\Z^{d-1}\times
\lp{-\infty},N\rrbr$ have color $-\infty$. Call this coloring
$\lambda_d$, and let $S^+_d$ be the set corresponding to $S^+$
in Proposition~\ref{surface}(c) (all of whose open sites are
colored $\infty$). Similarly, for each of the two senses of
the $d$ coordinate directions, apply Proposition \ref{surface}
to the lattice rotated and translated so that the part of the
half-axis at distance greater than $N$ from the origin is
mapped to the positive $d$-axis. Thus we obtain $2d$
colorings $\lambda_{i}$ of $W_p(\Z^d)$, with associated sets
$S^+_i$, for $i=\pm1,\ldots,\pm d$, such that all the
colorings assign color $-\infty$ to $\llbr-N,N\rrbr^d$, and
$\lambda_i$ assigns color~$\infty$ to sites sufficiently far
along the $i$ coordinate half-axis.

For each $i$ as above, let $S_i^{++}$ be the set of sites $y$
such that every site within $\ell^1$-distance $dMn$ of $y$ lies
in $S_i^+$. We claim that
\[
Z:=\Z^d\Bigm\backslash \bigcup_{i=\pm1,\ldots,\pm d} S_i^{++}
\]
is a finite set. This follows from Lemma \ref{intersection}
because $\Z^d\setminus S_i^{++}$ lies in a set of the form
$A_i$ in the lemma. [Here it is important that the Lipschitz
constant in Proposition \ref{surface}(c) is $d^{-1}$.]
Since $f$ is injective, it follows that, for some $m>1$, the
site $f((1,\ldots,1,m))$ lies outside $Z$, and hence lies in
$S_I^{++}$ for some $I$. Since $f(\llbr1,n\rrbr^{d-1}\times\{m\})$
has $\ell^1$-diameter at most $dMn$, this implies that $f(\llbr
1,n\rrbr^{d-1}\times\{m\})$ is a subset of $S_I^+$, and is
therefore colored $\infty$ in $\lambda_I$.

Now define a coloring
\[
\chi\dvtx\llbr1,n\rrbr^{d-1}\times\llbr
1,m\rrbr\to\{\infty,-\infty,1,2,\ldots,d-1\}
\]
via $\chi:=\lambda_I\circ f$. By the construction, $\chi$ satisfies
properties (a) and (b) of Proposition \ref{tucker}. Now, if $x,y$ are
adjacent sites in $G^*$ then $\|x-y\|_1\leq d$, and therefore
the $M$-Lip property gives
\[
\|f(x)-f(y)\|_\infty\leq
\|f(x)-f(y)\|_1\leq dM=J,
\]
so $f(x),f(y)$ are adjacent in
$G(W_p(\Z^d),\ell^\infty,J)$. Hence, property (i) in
Proposition \ref{surface} implies that $\chi$ has no two
adjacent sites in $G^*$ with colors $+\infty$ and $-\infty$,
which is property (c) of Proposition \ref{tucker}. Therefore
by Proposition \ref{tucker}, for some $j\neq\pm\infty$, $\chi$
has a $j$-cluster of volume at least $n$ with respect to~$G^*$.
Let~$A$ be such a cluster. Since $f$ is injective, $f(A)$ also
has volume at least~$n$. But by the above observation on
adjacency, $f(A)$ is a subset of some $j$-cluster of
$\lambda_I$ with respect to $G(W_p(\Z^d),\ell^\infty,J)$. This
contradicts property (b) in Proposition \ref{surface} because
$n>K$.
\end{pf*}

\section{Embedding and quasi-isometry}\label{ebqi}

We will use the following simple renormalization construction.
Fix an integer $r\!\geq\!1$. For a site
\mbox{$x\!=\!(x_1,\ldots,x_D)\!\in\!\Z^D$} define
the corresponding \textit{clump} (or $r$-\textit{clump})
to be the
set of $r$ sites given by\looseness=-1
\[
K_x:=\{(x_1,\ldots, x_{D-1},rx_D+i)\dvtx i\in\llbr0,r-1\rrbr\}.\vspace*{-2pt}
\]\looseness=0
The clumps $(K_x\dvtx x\in\Z^D)$ form a partition of
$\Z^D$, with the geometry of $\Z^D$ stretched by a factor $r$
in the $D$th coordinate. If $\|x-y\|=k$, then, for all $u\in
K_x$ and $v\in K_y$, we have $\|u-v\|\leq(2r-1)k$.\vspace*{-3pt}
\begin{pf*}{Proof of Proposition \ref{other0}}
Let $d<D$, which is to say that $\Mc(d,D)<\oo$.
Any given $r$-clump contains one or more open sites with
probability $1-(1-p)^r$. If this probability exceeds $\pc(d,D,M)$,
there exists a.s. an $M$-Lip injection $f\dvtx\Z^d \to\Z^D$
such that, for each $y \in f(\Z^d)$, the clump $K_y$
contains some open site. By choosing one representative open site in
each such clump, we obtain a $(2r-1)M$-Lip injection from $\Z^d$
to $W_p(\Z^D)$. Hence,
\[
\pc\bigl(d,D,(2r-1)M\bigr)\leq1-\bigl(1-\pc(d,D,M)\bigr)^{1/r}.\vspace*{-2pt}
\]
The claim follows by the monotonicity of $\pc$ in $M$.\vspace*{-3pt}
\end{pf*}
\begin{pf*}{Proof of Proposition \ref{other}}
(a) We may assume with loss of generality that $d=2\le D$.
The proof follows that of Theorem \ref{main}(b) as presented
in Section \ref{nn}, without one difference. Let $\eta\in\Om_2$.
The event $A=A(x_1,\ldots,x_k)$
of Lemma \ref{congruent} is redefined as the event that
there exists a singly-infinite path
$0=y_0,y_1,\ldots$ in $\Z^D$ such that the sites
$(x_i+y_j\dvtx i=1,\ldots,k, j=0,1,\ldots)$ are distinct, and
$\om(x_i+y_j) = \eta(i,j)$ for all such $i$, $j$. As in the proof
of Lemma \ref{congruent}, $\PP_p(A)=0$
whenever $\max\{p,1-p\}<(2D)^{-1/k}$. The proof is now
completed as for the earlier theorem.

(b) Let $d<D$ and write $m=\Mc(d,D)<\oo$.
Given $p\in(0,1)$, choose~$r$ sufficiently large that any given
$r$-clump contains both an open and a~closed site with probability
exceeding $\pc(d,D,m)$. There exists a.s. an $m$-Lip
injection $f \dvtx\Z^d \to\Z^D$ such that, for each $y \in
f(\Z^d)$, the $r$-clump $K_y$ contains both an open and a closed
site. Hence, for any configuration $\eta$, by choosing the
open or the closed site as appropriate in each $r$-clump, we obtain
a~$(2r-1)m$-Lip embedding of $\eta$ into $\omega$.

(c) Let $d \ge1$, $M\ge1$, and let $\eta\in\Om_d$ be
partially periodic. Without loss of generality, we may assume,
for some $r \in\Z_+$ and all $y\in\Z^d$, that $\eta(ry) =
\eta(0)=1$. Let $\om\in\Om_d$ and assume there exists an
$M$-Lip embedding $f$ from~$\eta$ into $\om$. Let
$g\dvtx\Z^d\to\Z^d$ be given by $g(x) = f(rx)$. Then $g$ is an
$rM$-Lip injection from $\Z^d$ into $W_p(\Z^d)$. By Theorem
\ref{main}(c), such an injection exists only for $\om$ lying in
some $\PP_p$-null set.\vspace*{-3pt}
\end{pf*}
\begin{pf*}{Proof of Proposition \ref{quasi}}
(a) We assume without loss of generality that $D=d+1$. Given
$p<1$, take $r$ sufficiently large that a given $r$-clump
in~$\Z^D$ contains some open site with probability exceeding
$1-(2D)^{-2}$. By Theorem \ref{don}, there exists a $1$-Lip
map $F\dvtx\Z^{D-1}\to\Z_+$ such that for all $u\in\Z^{D-1}$, the\vadjust{\goodbreak}
$r$-clump $K_{(u,F(u))}$ contains some open site. By choosing
an arbitrary open site to represent each such clump, we obtain
a quasi-isometry of the required form.

(b) Let $d\geq D$, and suppose that with positive probability
there exists a quasi-isometry from $(\Z^d,\ell^1)$ to some
subspace of $(W_p(\Z^D),\ell^1)$. We will prove that, for some
$p'\in(0,1)$ and $M\ge1$, there exists an $M$-Lip injection
$g\dvtx\Z^d \to W_{p'}(\Z^D)$, which will contradict \eqref{mcoo}.

Recall the parameters $\bc=(c_1,c_2,\ldots,c_5)$ in the
definition of a $\bc$-quasi-isometry, and let $Q_\bc$ be the
event that there exists a $\bc$-quasi-isometry from
$(\Z^d,\ell^1)$ to some subset of $(W_p(\Z^D),\ell^1)$. Since
each $Q_\bc$ is invariant under the action of translations of
$\Z^D$, it has probability $0$ or $1$. Under the above
assumption, the event $\bigcup_\bc Q_\bc$ has positive
probability. By the obvious monotonicities in the parameters
$c_i$, this union is equal to the union
$\bigcup_{\bc\in(\Q\cap(0,\infty))^5} Q_\bc$ over rational
parameters, and hence there exists a \textit{deterministic} $\bc$
such that $Q_\bc$ has probability $1$. We choose $\bc$
accordingly, and let~$\sF_\bc$ be the (random) set of
quasi-isometries of the required type.

A quasi-isometry $f \in\sF_\bc$ is not necessarily an
injection, but, by the properties of a $\bc$-quasi-isometry,
there exists $C=C(d,D,\bc)$ such that, for all $y\in W_p(\Z^D)$
we have $|f^{-1}(y)|\leq C$. Let $r=C$, and take $p'\in(0,1)$
sufficiently large that, with probability at least $p$, every
site in any given $r$-clump is $p'$-open. Let $f\in\sF_\bc$ be
such that: for $y\in f(\Z^d)$, every site in $K_y$ is
$p'$-open. Since the pre-image under $f$ of any $y \in\Z^D$
has cardinality $C$ or less, we may construct an injection
$g\dvtx\Z^d \to W_{p'}(\Z^D)$ such that, for $y \in\Z^D$, every
$x\in f^{-1}(y)$ has $g(x)\in K_y$, and furthermore distinct
elements $x\in f^{-1}(y)$ have distinct images $g(x)$. It is
easily seen that $g$ is an $M$-Lip injection for some
$M=M(d,D,\bc)$.\vspace*{-3pt}
\end{pf*}

\section{Open questions}\label{openp}\vspace*{-3pt}

\begin{longlist}[5.4.]
\item[5.1.]
Derive quantitative versions of Theorem \ref{main}. For
example, fix $d,M,p$, and let $N=N(n)$ be the smallest integer
such that there exists an $M$-Lipschitz injection from the cube
$[ 1,n]^d\cap\Z^d$ to the open sites of $[1,N]^d\cap\Z^d$ with
probability at least $\frac12$. How does $N$ behave as
$n\to\infty$?\vspace*{1pt}

\item[5.2.]
For which graphs $G$ and which $M$ is it the case that for $p$
sufficiently close to $1$ there exists an $M$-Lipschitz
injection from $V(G)$ to the open sites of $V(G)$ (where
$M$-Lipschitz refers to the graph metric of $G$)? Theorem
\ref{main}(c) shows that for~$\Z^d$, no $M<\infty$ suffices. On
the other hand, for the $3$-regular tree, $M=2$ suffices, by
the well-known fact that percolation on a 4-ary tree contains a
binary tree for $p$ sufficiently close to $1$.

\item[5.3.]
We may interpolate between $1$-Lipschitz and $2$-Lipschitz maps
as follows. Let $S$ be a subset of $\{-1,0,1\}^D$, and let $G$
be the graph with vertex set $\Z^D$ and an edge between $u,v$
whenever $u-v$ or $v-u$ belongs to $S$. For which $d$, $D$
and $S$ does there exist an injection from $\Z^{d}$ to the open
sites of~$\Z^{D}$ that maps neighbors in~$\Z^d$ to neighbors
in $G$?

\item[5.4.]
Does there exist a configuration $\eta\in\{0,1\}^d$ such that,
with positive probability there exists a\vadjust{\goodbreak} Lipschitz embedding of
$\eta$ into the percolation configuration $\omega$ on $\Z^d$?
When $d=1$, this is related to the main problem
of~\cite{glr}.
\end{longlist}

\section*{Acknowledgments}
We thank Olle H\"aggstr\"om and Peter Winkler for valuable
discussions. G. R. Grimmett acknowledges support from Microsoft Research
during his stay as a Visiting Researcher in the Theory Group in
Redmond. This work was completed during his attendance at a
programme at the Isaac Newton Institute, Cambridge.

%

%
\printaddresses

\end{document}